\date{November 4, 2014}
\newtheorem{theorem}{Theorem}[section]
\newtheorem{corollary}[theorem]{Corollary}
\theoremstyle{remark}
\newcommand{\EE}{{\mathbb E}}
\newcommand{\RR}{{\mathbb R}}
\newcommand{\eps}{\epsilon}
\title{Kelly criterion for variable pay-off}
\subjclass[2010]{ 91A60; 91B30}
\keywords{Kelly, gambling, variable pay-off.}
\author[R. P\'{e}rez Marco]{Ricardo P\'{e}rez Marco}
\address{CNRS, LAGA UMR 7539, Universit\'e Paris XIII,
99, Avenue J.-B. Cl\'ement, 93430-Villetaneuse, France}
\email{ricardo@math.univ-paris13.fr}
\thanks{.}
\begin{document}

\maketitle

\begin{abstract}
We determine Kelly criterion for a game with variable pay-off. The Kelly fraction satisfies a fundamental integral equation and is 
smaller than the classical Kelly fraction for the same game with the constant average pay-off.
\end{abstract}

\section{Introduction.}

Kelly criterion (see \cite{Ke}), also called "`Fortune Formula"', is the fundamental tool 
to ensure an optimal positive return when playing with repetition a favorable game or strategy. 
It was first proposed by Edward Thorp for the money management of his card counting strategy 
to beat casino blackjack (see \cite{Th1}). Later he applied the same ideas to the financial markets (see \cite{Ka-Th}), and L. Breinman (see \cite{Bre})
proved that it was the optimal strategy for long run accumulation of capital.  

One needs to modify the theory for common situation where the advantage is not known exactly 
but is a random variable (see \cite{MG-PM}). In this
"`fuzzy advantage"' situation one needs to be far more conservative (see section 3 in \cite{MG-PM}).
When we apply Kelly criterion to decision making in real situations, we are confronted with a fuzzy advantage 
but also a variable reward or pay-off. In this article we study how to modify Kelly criterion when the pay-off
is a random variable with a known distribution. We obtain the same type of conclusion than in \cite{MG-PM}: The optimal Kelly fraction 
is more conservative than the one in the same game with the constant average pay-off. 

\section{The classical Kelly criterion.}

We assume that we are playing a game with repetition. At each round we risk a fraction $0\leq f\leq 1$ of our capital $X$. 
With probability $0 < p <1$ we win and the pay-off is $b$-to-$1$, with $b\geq 0$. This means that if $X$ is 
our current capital, if we lose the bet (which happens with probability
$q=1-p$) we substract $fX$ to our capital, and if we win, we add $bfX$ to our capital . Thus, the expected gain is
$$
\EE (\Delta X) = p b f X - q fX= (p b-q) fX = (p(1+b)-1) fX \ .
$$
If we play a game with advantage, that is $p(1+b)>1$, then we expect an exponential growth 
of our initial bankroll $X_0$ if we follow a reasonable
betting strategy. We assume that there is no minimal 
unit of bet. By homogeneity of the problem, a sharp 
strategy must consist in betting a proportion $f(p)$ of 
the total bankroll. 
In the classical Kelly criterion, $p$ and $b$ are assumed to be constant at each round. 
In this article we assume that $p$ is constant but the pay-off $b$ is a random variable. We first review the classical Kelly criterion 
with a constant pay-off that finds $f(p)$ in order 
to maximize the expected exponential growth. Our bankroll after having played $n$ rounds of the 
game have is
$$
X_n=X_0 \prod_{i=1}^n (1+b_i f(p))
$$
where $b_i=b$ if we won the $i$-th round, and $\eps_i=-1$
if we lost the $i$-th round. The exponential rate of growth of the bankroll is
$$
G_n =\frac1n \log \frac{X_n}{X_0}=
\frac1n \sum_{i=1}^n \log (1+b_i f(p)).
$$
The Kelly criterion maximizes the expected value of the 
exponential rate of growth:

\begin{theorem} \textbf {(Kelly criterion)}
For a game with advantage, that is $p(b+1) >1$, the expected value of the exponential rate 
of growth $G_n$ is maximized for 
$$
f^*(p, b)=p-q/b= \frac{p(1+b)-1}{b}\ .
$$
\end{theorem}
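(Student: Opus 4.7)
The plan is to reduce the maximization of $\mathbb{E}[G_n]$ to a one-variable concave optimization on $[0,1]$, since the rounds are i.i.d. and $G_n$ is an empirical average. Concretely, I would first observe that each summand $\log(1+b_i f)$ in the expression for $G_n$ has the same law, so by linearity of expectation
\[
\mathbb{E}[G_n] = \mathbb{E}[\log(1+b_1 f(p))] = p \log(1+b f) + q \log(1-f),
\]
where I write $f=f(p)$ for brevity and use $b_1=b$ with probability $p$ and $b_1=-1$ with probability $q$. Note that the value of $\mathbb{E}[G_n]$ is in fact independent of $n$, which immediately makes the optimization tractable.

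Next I would define $\Phi(f) = p\log(1+bf)+q\log(1-f)$ on $[0,1)$ and analyze its critical points. Since each of the two logarithms is strictly concave in $f$ and $p,q>0$, the function $\Phi$ is strictly concave, so any interior critical point is automatically the unique global maximum. Differentiating,
\[
\Phi'(f) = \frac{p b}{1+b f} - \frac{q}{1-f},
\]
and setting $\Phi'(f)=0$ gives $p b (1-f) = q(1+b f)$, i.e. $p b - q = b f(p+q) = b f$, from which
\[
f^*(p,b) = \frac{p b - q}{b} = p - \frac{q}{b} = \frac{p(1+b)-1}{b}.
\]

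Finally I would justify that this critical point lies in the admissible range $(0,1)$ precisely under the favorable-game assumption $p(1+b)>1$. The numerator $p(1+b)-1$ is positive exactly by this hypothesis, so $f^*>0$; and $f^*<1$ follows from $p-q/b < p \le 1$ (and in fact $f^*\le p$). Combined with the strict concavity of $\Phi$, this shows $f^*$ is the unique maximizer on $[0,1)$. The only mild subtlety — hardly an obstacle — is to remark that one must restrict to $f<1$ so that $\log(1-f)$ is finite (a ruin-avoidance constraint), and to note that $\Phi'(0)=pb-q>0$ under the advantage hypothesis, which confirms that the optimum is not attained at the boundary $f=0$.
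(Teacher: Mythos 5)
Your proof is correct and follows essentially the same route as the paper: reduce $\mathbb{E}[G_n]$ to the single-round expectation $p\log(1+bf)+q\log(1-f)$, differentiate, use strict concavity, and solve the first-order condition to get $f^*=(pb-q)/b$. Your added verification that $f^*$ lies in $(0,1)$ under the advantage hypothesis is a welcome small supplement to what the paper leaves implicit.
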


The argument is straightforward. Observe that the expected value is
$$
\EE (G_n)= \EE (G_1)=p\log (1+bf)+(1-p) \log (1-f)=g(f).
$$
This function of the variable $f$ has a derivative,
$$
g'(f)=\frac{pb}{1+bf} - \frac{1-p}{1-f}=\frac{(p(1+b)-1)-fb}{(1+fb)(1-f)} \ ,
$$
and $g(f)\to 0$ when $f\to 0^+$, and $g(f) \to - \infty$ when $f\to 1^-$. Also
$g'(f) >0$ near $0$, and $g'$ is decreasing, so $g$ is concave.
Thus $g(f)$ increases from $0$ to its maximum attained at 
$$
f^*=f^*(p, b)=\frac{p(1+b)-1}{b}
$$
and then decreases to $-\infty$.

\section{Kelly criterion with variable pay-off.}

This situation arises in a number of practical situations. The original problem that motivated the use of 
Kelly criterion was casino blackjack where the pay-off is constant $b=1$. But in other card games, like poker cash, 
the pay-off is variable.  Also some trading strategies cannot set a predetermined pay-off, for 
example when speculating with a price rebound in a volatile market. The historic of trades of traders present a certain distribution 
of pay-offs for the successful trades. Therefore in these situations we cannot consider $b$ constant. We assume that the 
pay-off $b$ is a random variable with a known non-negative distribution $\rho: \RR_+ \to \RR_+$, with $\rho(x) dx$ giving the probability that 
the pay-off is in the infinitesimal interval $[x, x+dx]$. In practice $\rho$ has compact support and can be obtained empirically, although the model in 
\cite{PM} shows that the tail is of Pareto type. 
We do not need to assume that the distribution is absolutely continuous with respect to the Lebesgue measure (same proofs). 

We can use a similar argument as in the previous section in order to determine the sharp fraction to bet $\hat f$. 

To determine when the game is favorable
we compute
$$
\EE(X_1-X_0)= (1-p) f X_0  + p f X_0 \int_0^{+\infty } b \ \rho (b) \ db \ , 
$$
Thus the condition for a favorable game is
\begin{equation} \label{advantage}
p\left ( 1 + \int_0^{+\infty } b \ \rho (b) \ db \right ) >1 \ ,
\end{equation}
which (naturally) is the same condition than that of a constant pay-off game where the pay-off is the average pay-off:
$$
p( 1 + \bar b ) >1 \ ,
$$
and 
$$
\bar b =\int_0^{+\infty} b \ \rho (b) \ db.
$$

After $n$ rounds, if $b_i$ is the 
pay-off in the $i$-th round, the expected 
exponential growth is
\begin{align*}
g(f)=\EE (G_n) &=\frac1n \sum_{i=1}^n \EE (\log (1+b_i f)) \\
&= q \log (1-f) + p \int_0^{+\infty} \log(1+b f) \ \rho(b) \ db  \ ,
\end{align*}
under the integrable assumption that the density of the pay-off function makes the integral finite.

Again $g(f) \to 0$ when $f\to 0^+$, $g(f) \to -\infty$ when $f\to 1^-$, and
$$
g'(f)= - \frac{q}{1-f} + p \ \int_0^{+\infty} \frac{b \rho (b)}{1+bf} \ db \ .
$$
We observe that when $f\to 0^+$,
$$
g'(f) \approx -(1-p) + p \int_0^{+\infty } b \ \rho (b) \ db 
$$
thus $g'(f) >0$ by the favorable game condition (\ref{advantage}). Moreover $g'$ is strictly decreasing, 
so $g$ is strictly concave, and tends to $-\infty$ when $f\to 1^-$. Thus there
is exactly one value $\hat f=\hat f(p)$ that maximizes this expression and annihilates $g'$. It is the unique solution $\hat f (p, \rho)$ to 
the fundamental integral equation
\begin{equation}\label{fundamental_equation}
p\int_0^{+\infty } \frac{b \ \rho (b)}{1+b \hat f } \ db-\frac{1-p}{1-\hat f} =0 \ .
\end{equation}

\begin{theorem}\textbf{(Kelly criterion for variable pay-off)}
A game with variable pay-off with a distribution $\rho$ is favorable if
\begin{equation*} 
p\left ( 1 + \int_0^{+\infty } b \ \rho (b) \ db \right ) >1 \ .
\end{equation*}
The expected value of the exponential rate of growth is maximized for $0<\hat f=\hat f(p, \rho) <1$ satisfying the fundamental integral 
equation
\begin{equation*}
p\int_0^{+\infty } \frac{b \ \rho (b)}{1+b \hat f } \ db-\frac{1-p}{1-\hat f} =0 \ .
\end{equation*}
\end{theorem}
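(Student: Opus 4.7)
The plan is to follow verbatim the structure of the one-round argument from Section 2, replacing the single term $p\log(1+bf)$ by its integral against the density $\rho$. Since the rounds are i.i.d., $\EE(G_n)=\EE(G_1)=g(f)$, so the problem reduces to studying the function
\[
g(f) = q\log(1-f) + p\int_0^{+\infty}\log(1+bf)\,\rho(b)\,db
\]
on the interval $[0,1]$. The favorability half of the statement I would dispatch first, by computing $\EE(X_1-X_0)$ directly via conditioning on win/loss and integrating against $\rho$, which gives $fX_0(p(1+\bar b)-1)$; this is positive precisely when $p(1+\bar b)>1$.

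The core of the proof is to show that $g$ has a unique interior maximizer and that this maximizer is the solution of the fundamental integral equation. I would begin with the boundary behaviour: the inequality $\log(1+bf)\le bf$ together with $\bar b<\infty$ makes $g(0^+)=0$ legitimate, while $q\log(1-f)$ drives $g$ to $-\infty$ as $f\to 1^-$. Next, differentiation under the integral sign — justified by the dominating function $b\rho(b)$ on compact sub-intervals of $(0,1)$, integrable by the finiteness of $\bar b$ — gives the stated expression for $g'(f)$. Passing to the limit $f\to 0^+$ in $g'$, the leading term is exactly $p(1+\bar b)-1$, which is strictly positive by the favorability hypothesis.

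The only genuinely delicate step is securing the strict concavity needed for uniqueness. Each $f\mapsto\log(1+bf)$ is concave for $b\ge 0$, so integration against the positive measure $\rho(b)\,db$ produces a concave summand; combined with the strictly concave term $q\log(1-f)$, the total $g$ is strictly concave on $(0,1)$. Equivalently, a direct computation of $g''(f)$ shows it is negative. Strict concavity, together with $g'(0^+)>0$ and $g(1^-)=-\infty$, forces a unique interior maximizer $\hat f\in(0,1)$ determined by $g'(\hat f)=0$, and this equation is precisely the fundamental integral equation. The main obstacle, such as it is, is bookkeeping: verifying that no growth of $\rho$ near infinity breaks the differentiation under the integral — resolved cleanly by the standing integrability hypothesis $\bar b<\infty$.
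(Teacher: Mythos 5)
Your proposal is correct and follows essentially the same route as the paper: reduce to the one-round function $g(f)=q\log(1-f)+p\int_0^{+\infty}\log(1+bf)\,\rho(b)\,db$, check the boundary behaviour, verify $g'(0^+)=p(1+\bar b)-1>0$, and use strict concavity to get a unique interior critical point satisfying the fundamental integral equation. The only difference is that you make explicit the domination arguments justifying differentiation under the integral sign, which the paper leaves implicit under its standing integrability assumption.
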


\begin{corollary}
The optimal Kelly fraction $\hat f (p, \rho)$ for a favorable game with variable pay-off is smaller than the optimal Kelly fraction 
$f^*(p, \bar b)$ for the same game with constant pay-off equal to the average pay-off $\bar b$,
$$
\bar b =\int_0^{+\infty} b \ \rho (b) \ db \ .
$$
We have
$$
\hat f (p, \rho)\leq f^*(p,\bar b) \ .
$$
We only have equality when the pay-off is constant.
\end{corollary}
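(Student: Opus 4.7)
The plan is to exploit the monotonicity already established in the proof of the theorem: the derivative
\[
g'(f) = p\int_0^{+\infty} \frac{b\,\rho(b)}{1+bf}\,db - \frac{1-p}{1-f}
\]
is strictly decreasing on $[0,1)$ with unique root $\hat f$. Consequently, showing $\hat f \leq f^*(p,\bar b)$ is equivalent to showing $g'(f^*) \leq 0$, a much cleaner condition than trying to compare the two optima directly.

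Using the defining relation $\frac{p\bar b}{1+\bar b f^*} = \frac{1-p}{1-f^*}$ coming from the classical Kelly theorem applied to the constant pay-off $\bar b$, the inequality $g'(f^*) \leq 0$ reduces to
\[
\int_0^{+\infty} \frac{b}{1+bf^*}\,\rho(b)\,db \;\leq\; \frac{\bar b}{1+\bar b f^*}.
\]

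This is Jensen's inequality applied to the function $\varphi(b) = b/(1+bf^*)$, which is strictly concave on $\RR_+$ for $f^*>0$ (direct computation gives $\varphi''(b) = -2f^*/(1+bf^*)^3 < 0$), with $\rho$ as the probability density of $b$ and $\bar b$ as its mean; thus $\EE[\varphi(b)] \leq \varphi(\EE[b])$ is exactly the required estimate. For the equality case, strict concavity of $\varphi$ makes Jensen's inequality strict unless $b$ is $\rho$-almost surely constant, giving precisely the stated characterization. The only nontrivial step is spotting the right concave function to feed into Jensen; once one writes the target in the form $\EE[\varphi(b)] \leq \varphi(\EE[b])$, the rest of the argument is automatic.
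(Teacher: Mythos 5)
Your proof is correct and rests on the same key idea as the paper's: Jensen's inequality applied to the strictly concave function $b \mapsto b/(1+bf)$, combined with the strict monotonicity of the relevant derivative to convert a sign condition into the inequality $\hat f \leq f^*$. The only (immaterial) difference is that you evaluate $g'$ at $f^*$ and show it is $\leq 0$, whereas the paper evaluates the constant-pay-off derivative at $\hat f$ and shows it is $\geq 0$ --- a mirror image of the same argument, with the same treatment of the equality case.
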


\begin{proof}
The function of $b$, 
$$
h(b)= \frac{b}{1+b\hat f} \ ,
$$
is strictly concave and therefore, by Jensen's inequality, we have 
$$
\int_0^{+\infty} \frac{b \ \rho(b)}{1+b\hat f} \ db \leq  \frac{\bar b}{1+\bar b f} \ .
$$
So, using the fundamental equation (\ref{fundamental_equation})we get
$$
p \frac{\bar b}{1+\bar b f}-\frac{1-p}{1-\hat f}= -\frac{1-p}{1-f^*}-\frac{1-p}{1-\hat f}\geq 0 \ ,
$$
and the result follows. The case of equality follows from the case of equality of Jensen's inequality and only occurs for a Dirac distribution. 
\end{proof}

\medskip

\textbf{Conclusion:}

In a situation when the pay-off is variable one needs to adjust the Kelly fraction in a conservative way. 

\textbf{Remark.}

The analysis generalizes to situations where the risk is larger than the fraction waged. As noted by Thorp, this happens in a leveraged 
investment in the financial markets.

\end{document}